\documentclass[12pt,leqno]{article}

\usepackage{fullpage}
\usepackage{amssymb}
\usepackage{amsmath}
\usepackage{amsthm}
\textwidth=30cc\baselineskip=10pt

\newtheorem{theorem}{Theorem}
\newtheorem{problem}{Problem}

\newtheorem{definition}{Definition}

\newtheorem{lemma}{Lemma}

\begin{document}
\title{\large\bf On a Problem of Erd\H os, Herzog and Sch\"{o}nheim\footnote{This work was supported by the National Natural
Science Foundation of China, Grant No 11071121. }}
\date{}
\author{ Yong-Gao Chen\footnote{Corresponding author, email: ygchen@njnu.edu.cn}  and
Cui-Ying Hu
\\
\small School of Mathematical Sciences and Institute of Mathematics,
\\ \small Nanjing Normal University,  Nanjing 210046, P. R. CHINA}
 \maketitle

\begin{abstract} Let $p_1, p_2, \dots , p_n$ be distinct primes.
 In 1970, Erd\H os, Herzog and Sch\"{o}nheim proved that if $\cal D$ is a set of divisors of
$N=p_1^{\alpha_1}\cdots p_n^{\alpha_n}$, $\alpha_1\ge
\alpha_2\ge\cdots \ge \alpha_n$, no two members of the set being
coprime and if no additional member may be included in $\cal D$
without contradicting this requirement then $ |{\cal D}|\ge
\alpha_n \prod_{i=1}^{n-1} (\alpha_i +1) $. They asked to
determine all sets $\cal D$ such that the equality holds. In this
paper we solve this problem. We also pose several open problems
for further research.

\end{abstract}

{\bf 2010 Mathematics Subject Classifications:} 05D05, 11A05

{\bf Keywords:} intersection theorems; divisors; extremal set

\section{ Introduction}

Many theorems on intersections of sets have been established. One of
intersection theorems is the next theorem of  Erd\H os, Ko and Rado.

{\bf Theorem A \cite[Erd\H os-Ko-Rado]{Erdos-Ko}.} {\it If ${\cal
A} =\{ A_1, A_2, \ldots , A_m\} $ is a family of (different)
subsets of a given set $M$, $|M|=n$, such that $A_i\cap A_j\not=
\emptyset $ for every $i,j$, then

a) $m\le 2^{n-1}$ and for every $n$ there are $m=2^{n-1}$ such
subsets;

b) if $m<2^{n-1}$ then additional members may be included in $\cal
A$, the enlarged family still satisfying $A_i\cap A_j\not= \emptyset
$ for every $i,j$.}\vskip 3mm

Theorem A is equivalent to the following theorem.\vskip 3mm

{\bf Theorem B.} {\it If ${\cal A} =\{ d_1, d_2, \ldots , d_m\} $
is a set of (different) divisors of a given positive integer $N$,
$N=p_1p_2\cdots p_n$, where $p_1, p_2, \ldots , p_n$ are distinct
primes, such that $(d_i, d_j)>1 $ for every $i,j$, then

a) $m\le 2^{n-1}$ and for every $n$ there are $m=2^{n-1}$ such
divisors;

b) if $m<2^{n-1}$ then additional members may be included in $\cal
A$, the enlarged set still satisfying $(d_i, d_j)>1 $ for every
$i,j$.}\vskip 3mm

This means that if $\cal A$ is a maximal set with the property
$(d_i, d_j)>1 $ for every $i,j$, then $|{\cal A}|=2^{n-1}$. If we
allow repetitions in $M$ (resp. $N$ is not squarefree), it is more
convenient to state results with the language of divisors (see
\cite{Erdos-Herzog}, \cite{Erdos-Schonheim} and
\cite{Herzog-Schonheim}).

 In this paper, $p_1, p_2, \ldots , p_n$
are always distinct primes. Erd\H os, Herzog and Sch\"{o}nheim
\cite{Erdos-Herzog} proved the following theorem.\vskip 3mm

{\bf Theorem C \cite[Erd\H os-Herzog-Sch\"{o}nheim]{Erdos-Herzog}.
} {\it If $\cal D$, $|{\cal D}|=m$, is a set of divisors of
$N=p_1^{\alpha_1}\cdots p_n^{\alpha_n}$, $\alpha_1\ge
\alpha_2\ge\cdots \ge \alpha_n$, no two members of the set being
coprime and if no additional member may be included in $\cal D$
without contradicting this requirement then
\begin{equation*}\label{EHS} m\ge \alpha_n \prod_{i=1}^{n-1} (\alpha_i
+1) .\end{equation*} }\vskip 3mm

If $\cal D$ is the set of all positive divisors of $N$ which are
divisible by $p_n$, then $\cal D$ satisfies the assumptions of
Theorem C and has the minimum size, that is,
\begin{equation}\label{EHS} |{\cal D}|=\alpha_n \prod_{i=1}^{n-1} (\alpha_i
+1) .\end{equation} In \cite[Final remark]{Erdos-Herzog}, Erd\H
os, Herzog and Sch\"{o}nheim remarked that it would be of interest
to determine all sets $\cal D$ satisfying the assumptions of
Theorem C with (1).

In this paper we solve this problem. For convenience, we introduce
the following definitions.

\begin{definition}\label{def1} A set $\cal D$ of positive divisors of $N$ is
a $N-$set if no two elements of the set are coprime. A $N-$set
$\cal D$ is maximal if no additional divisor of $N$ may be
included.\end{definition}

\begin{definition}\label{def2} For a set $\cal D$ of
positive divisors of $N=p_1^{\alpha_1}\cdots p_n^{\alpha_n}$, an
element $d$ of $\cal D$ is a divisible minimal element if $d$ cannot
be divided by any other element of $\cal D$. Denote by $d({\cal D})$
the set of all divisible minimal elements of $\cal D$.
\end{definition}

It is clear that if $\cal D$ is a maximal $N-$set and $d\in {\cal
D}$, then $l\in {\cal D}$ for all $l\mid N$ with $d\mid l$. Now
the above Erd\H os-Herzog-Sch\"{o}nheim problem can be restated as

\begin{problem} Let $N=p_1^{\alpha_1}\cdots p_n^{\alpha_n}$,
where  $\alpha_1\ge \cdots \ge \alpha_n>0$. Determine all maximal
$N-$sets $\cal D$ with the minimum size.\end{problem}

First we find some maximal $N-$sets $\cal D$ with the minimum size.
Let
$$\alpha_1\ge \cdots\ge \alpha_u > \alpha_{u+1}=\cdots = \alpha_n.$$
If $\alpha_1=\cdots =\alpha_n$, let $u=0$. For any $v$ with $1\le
v\le n$, let
$${\cal D}(p_v)=\{ d : d\mid N, \, p_v\mid d\} .$$
Then all ${\cal D}(p_v) (1\le v\le n)$ are maximal $N-$sets. For $
u+1\le v\le n$ we have
\begin{equation*}|{\cal D}(p_v)|=\alpha_v
\prod_{i=1,i\not= v}^{n} (\alpha_i +1)=\alpha_n \prod_{i=1}^{n-1}
(\alpha_i +1) .\end{equation*} For $v\le u$ we have
\begin{equation*}|{\cal D}(p_v)|=\alpha_v
\prod_{i=1,i\not= v}^{n} (\alpha_i +1)>\alpha_n \prod_{i=1}^{n-1}
(\alpha_i +1).\end{equation*}

Now we consider the special case $\alpha_n=1$.  Let ${\cal D}'$ be
a maximal $p_{u+1}\cdots p_n-$set. By Theorem B we have  $|{\cal
D}'|=2^{n-u-1}$. Let
$${\cal D}=\left\{ dd' :  d\mid \frac N{p_{u+1}\cdots
p_n}, d'\in {\cal D}'\right\} .$$ Since ${\cal D}'$ is a
$p_{u+1}\cdots p_n-$set, we have $\cal D$ is a $N$-set. For $l\mid
N$ and $l\notin D$, let $l=l_1 l_1'$, where
$$l_1 \mid \frac N{p_{u+1}\cdots
p_n},\quad l_1' \mid p_{u+1}\cdots p_n.$$ By $l\notin D$ we have
$l_1'\notin {\cal D}'$. Since ${\cal D}'$ is a maximal
$p_{u+1}\cdots p_n-$set, there exists $d'\in {\cal D}'$ such that
$(l_1', d')=1$. Thus $(l, d')=1$ and $d'\in {\cal D}$. Thus we have
proved that $\cal D$ is a maximal $N-$set. We have
$$|{\cal D}|=|{ \cal D}'| \prod_{i=1}^u (\alpha_i+1) =2^{n-u-1}\prod_{i=1}^u (\alpha_i+1) =\alpha_n \prod_{i=1}^{n-1}
(\alpha_i +1) .$$

In this paper we show that these are all maximal $N-$sets $\cal D$
with the minimum size.

\begin{theorem}\label{mainthm1}Let $N=p_1^{\alpha_1}\cdots p_n^{\alpha_n}$ with $\alpha_1\ge
\alpha_2\ge\cdots\ge \alpha_u>\alpha_{u+1}=\cdots =\alpha_n\ge 2$.
Then the following statements are equivalent each other:

(a)  $\cal D$ is a maximal $N-$set with the minimum size.

(b) $\cal D$ is a maximal $N-$set with $d({\cal D})=\{ p_v\}$ for
some $u+1\le v\le n$.

(c) ${\cal D} =\{ d : d\mid N, \, p_v\mid d\} $ for some $u+1\le
v\le n$.
\end{theorem}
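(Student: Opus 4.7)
The implications (b)$\Rightarrow$(c), (c)$\Rightarrow$(a), and (a)$\Rightarrow$(b) complete the cycle; the first two are essentially immediate and the substance lies in (a)$\Rightarrow$(b).

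\emph{Easy directions.} For (c)$\Rightarrow$(a), if ${\cal D}=\{d\mid N:p_v\mid d\}$ with $v\in\{u+1,\dots,n\}$, then ${\cal D}$ is a maximal $N$-set of size $\alpha_n\prod_{i<n}(\alpha_i+1)$ by the calculation already given in the paragraphs preceding the theorem, and by Theorem C this equals the minimum size. For (b)$\Rightarrow$(c), the hypothesis $d({\cal D})=\{p_v\}$ forces every element of ${\cal D}$ to be a multiple of $p_v$, so ${\cal D}\subseteq\{d\mid N:p_v\mid d\}$; the latter is an $N$-set (any two of its elements share $p_v$), so maximality of ${\cal D}$ gives equality, and then matching sizes $\alpha_v\prod_{i\ne v}(\alpha_i+1)=\alpha_n\prod_{i<n}(\alpha_i+1)$ forces $\alpha_v=\alpha_n$.

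\emph{Hard direction.} For (a)$\Rightarrow$(b), let ${\cal D}$ be a maximal $N$-set of minimum size. The plan is to exhibit a single prime $p_v$ dividing every element of ${\cal D}$; combined with the argument used in (b)$\Rightarrow$(c), this yields ${\cal D}=\{d\mid N:p_v\mid d\}$ and $d({\cal D})=\{p_v\}$. To locate the common prime I decompose divisors of $N$ along $p_n$: each writes uniquely as $ep_n^j$ with $e\mid N':=N/p_n^{\alpha_n}$ and $0\le j\le\alpha_n$. By the remark after Definition \ref{def1}, ${\cal D}$ is closed under multiples, so $k(e):=\min\{j:ep_n^j\in{\cal D}\}$ (with $k(e)=\alpha_n+1$ if undefined) is monotone decreasing on the divisor lattice of $N'$, and $|{\cal D}|=(\alpha_n+1)\prod_{i<n}(\alpha_i+1)-\sum_e k(e)$, reducing minimality to $\sum_e k(e)=\prod_{i<n}(\alpha_i+1)$. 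Partition the divisors of $N'$ as $A\sqcup C\sqcup B$ with $A=\{e:k(e)=0\}$, $C=\{e:1\le k(e)\le\alpha_n\}$, $B=\{e:k(e)=\alpha_n+1\}$. The $N$-set property translates to (i): $e\in A$ and $(e,e')=1$ force $e'\in B$; maximality applied to $l=ep_n$ with $k(e)\ge 2$ translates to (ii): $k(e)\ge 2$ forces some $e'\in A$ with $(e,e')=1$. Together (i) and (ii) force $k|_C\equiv 1$ (if $e\in C$ had $k(e)\ge 2$, then (ii) gives $e'\in A$ with $(e,e')=1$, but (i) then puts $e\in B$, a contradiction), and the minimality identity reduces to the exact equation $|A|=\alpha_n|B|$.

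\textbf{The main obstacle} is the final structural step: showing that the upper set $A\subseteq\mathrm{Div}(N')$, together with condition (i) (equivalently, $B$ contains every $e'$ coprime to some $e\in A$) and the equation $|A|=\alpha_n|B|$, forces either $A=\emptyset$ (so that $k\equiv 1$ everywhere and ${\cal D}={\cal D}(p_n)$) or $A=\{e\mid N':p_v\mid e\}$ for some $v<n$ with $\alpha_v=\alpha_n$ (so that ${\cal D}={\cal D}(p_v)$). The difficulty is that $A$ is parametrized by an antichain of generators, and conditions (i) and $|A|=\alpha_n|B|$ admit many numerical possibilities a priori — for instance, $A$ generated by a squarefree composite like $p_1p_2$. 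Ruling out all non-single-prime generators systematically will likely require induction on $n$, invoking Theorem C together with its equality characterization on the smaller modulus $N/p_n^{\alpha_n}$, and a careful reading of (i) for antichain generators of small prime support.
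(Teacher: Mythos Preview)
Your easy directions are fine. For (a)$\Rightarrow$(b) you set up a clean decomposition and correctly derive $k|_C\equiv 1$ and $|A|=\alpha_n|B|$, but the proof is explicitly unfinished at the structural step, and the constraints you record are not even all the relevant ones: you applied maximality to $\ell=ep_n$ for $k(e)\ge2$ but not to $\ell=e$ for $e\in C$, which yields the further condition that every $e\in C$ is coprime to some $e'\in C$. Without it, $A$ generated by $p_v^{\alpha_v-\alpha_n+1}$ for $v\le u$ already satisfies $|A|=\alpha_n|B|$ (your own example $p_1p_2$, by contrast, does \emph{not}: there $|A|/|B|=\alpha_1\alpha_2\ge\alpha_n^2>\alpha_n$). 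Your suggested completion via induction and Theorem~C is not carried out, and it is not clear it would close the gap.

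The paper avoids this obstacle by a different and much shorter route. It passes to the squarefree kernel $\mathcal A=\{d\mid p_1\cdots p_n:d\in\mathcal D\}$ and invokes the matching theorem of Erd\H os--Herzog--Sch\"onheim (Theorem~D) on $\mathcal A_n'=\{d\in\mathcal A:p_n\nmid d\}$ to produce a permutation with $\bar d_{i_j}\mid d_jp_n$; the minimum-size hypothesis then turns the resulting inequalities $\alpha(d_j)\ge\alpha(\bar d_{i_j})$ into equalities (this is Lemma~4). Since $\bar p_n\in\mathcal A_n'$, one of these equalities reads $\alpha(d_1)=\alpha(p_n)=\alpha_n$. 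But $d_1$ is squarefree with $p_n\nmid d_1$, so $\alpha(d_1)=\prod_{p_i\mid d_1}\alpha_i$ with every factor $\ge\alpha_n\ge2$; the product can equal $\alpha_n$ only if $d_1$ is a single prime $p_v$ with $\alpha_v=\alpha_n$, i.e.\ $v>u$, contradicting $\{p_{u+1},\dots,p_n\}\cap\mathcal D=\emptyset$. The hypothesis $\alpha_n\ge2$ is used precisely at this last step --- that is the one-line idea your decomposition is missing.
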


\begin{theorem}\label{mainthm2}Let $N=p_1^{\alpha_1}\cdots p_n^{\alpha_n}$ with $\alpha_1\ge
\alpha_2\ge\cdots\ge \alpha_u>\alpha_{u+1}=\cdots =\alpha_n=1$.
Then the following statements are equivalent each other:

(a)  $\cal D$ is a maximal $N-$set with the minimum size.

(b) $\cal D$ is a maximal $N-$set with $d({\cal D})\subseteq \{ d :
d\mid p_{u+1}\cdots p_n\} $.

(c) $${\cal D} =\{ dd' : d\mid \frac{N}{p_{u+1}\cdots p_n}, d'\in
{\cal D}'\} $$ for a maximal $p_{u+1}\cdots p_n-$set ${\cal D}'$.
\end{theorem}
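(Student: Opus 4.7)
The plan is to prove the cycle (c) $\Rightarrow$ (a), (c) $\Rightarrow$ (b), (b) $\Rightarrow$ (c), and (a) $\Rightarrow$ (b). The first is already verified in the construction preceding the theorem, so only the last three require attention. Set $M := p_{u+1}\cdots p_n$ throughout.

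For (c) $\Rightarrow$ (b), let $\ell = dd' \in d({\cal D})$ be of the displayed form. If some prime $p_i$ with $i \le u$ divides $d$, then $(d/p_i)d' \in \cal D$ properly divides $\ell$, contradicting minimality; hence $d = 1$ and $\ell \mid M$. For (b) $\Rightarrow$ (c), set ${\cal D}' := \{d \in {\cal D} : d \mid M\}$. Every $\ell \in {\cal D}$ factors uniquely as $\ell = \ell_1 \ell_2$ with $\ell_1 \mid N/M$ and $\ell_2 \mid M$; by hypothesis some divisible minimal below $\ell$ divides $M$ and hence $\ell_2$, so the upward-closure property stated in the excerpt gives $\ell_2 \in {\cal D}'$. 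Conversely, each $\ell_1 \ell_2$ with $\ell_2 \in {\cal D}'$ is a multiple of $\ell_2 \in \cal D$, so $\ell_1 \ell_2 \in \cal D$ by upward closure. Finally, ${\cal D}'$ is a maximal $M$-set: any $d^* \mid M$ meeting every element of ${\cal D}'$ also meets every element of $\cal D$ (each $\cal D$-element shares its $M$-part with some member of ${\cal D}'$), so $d^* \in {\cal D}$ and thus $d^* \in {\cal D}'$.

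The main obstacle is (a) $\Rightarrow$ (b). The starting point is a general lemma: every divisible minimal of any maximal $N$-set is squarefree. Indeed, if $p_i^2 \mid d_0 \in d({\cal D})$, then $d_0/p_i \notin {\cal D}$ (else it would contradict the minimality of $d_0$), so maximality gives $d^* \in {\cal D}$ with $(d_0/p_i, d^*) = 1$; but the prime supports of $d_0/p_i$ and $d_0$ agree, forcing the forbidden $(d_0, d^*) = 1$. Encoding each divisible minimal by its prime support $T \subseteq [n]$ produces an intersecting antichain $\mathcal T$, and maximality of $\cal D$ forces the upward closure $\mathcal U$ of $\mathcal T$ in $2^{[n]}$ to be a maximal intersecting family: for every complementary pair $\{U, [n]\setminus U\}$ exactly one element lies in $\mathcal U$, and one obtains the identity $|{\cal D}| = \sum_{U \in \mathcal U}\prod_{j \in U}\alpha_j$.

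I conclude by partitioning $\mathcal U$ into fibers $f(A) := |\{U \in \mathcal U : U \cap [u] = A\}|$ for $A \subseteq [u]$. The pair condition yields $f(A) + f([u]\setminus A) = 2^{n-u}$, while the intersecting property forces $f(\emptyset) \le 2^{n-u-1}$ (the sets $V \in \mathcal U$ with $V \subseteq [u+1,n]$ form an intersecting family in $2^{[u+1,n]}$). Since $\alpha_j = 1$ for $j > u$, one has $|{\cal D}| = \sum_{A \subseteq [u]} f(A)\prod_{j \in A}\alpha_j$; writing $g(A) := f(A) - 2^{n-u-1}$ and pairing $A$ with $[u]\setminus A$ rewrites $|{\cal D}| - 2^{n-u-1}\prod_{j=1}^{u}(1+\alpha_j)$ as a sum over unordered pairs $\{A,[u]\setminus A\}$ of the term $g(A)\bigl(\prod_{j \in A}\alpha_j - \prod_{j \in [u]\setminus A}\alpha_j\bigr)$. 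The pair $\{\emptyset,[u]\}$ contributes $g(\emptyset)(1 - \prod_{j=1}^u\alpha_j) \ge 0$ since $g(\emptyset) \le 0$ and $\prod\alpha_j > 1$. The technical heart is showing the remaining pairs are nonnegative as well; my plan is to use the monotonicity $f(A) \ge f(A')$ for $A \supseteq A'$ (immediate from the definition of $\mathcal U$) together with cross-intersection requirements on fibers ${\cal F}_A, {\cal F}_{A'}$ with $A \cap A' = \emptyset$ to match the sign of each $g(A)$ to that of $\prod_{j \in A}\alpha_j - \prod_{j \in [u]\setminus A}\alpha_j$. Equality throughout then forces $f(A) = 2^{n-u-1}$ for every $A$, meaning $\mathcal U$ is a cylinder over the last $n-u$ coordinates and $\mathcal T \subseteq 2^{[u+1,n]}$, which is exactly (b).
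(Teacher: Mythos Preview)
Your implications (c)$\Rightarrow$(a), (c)$\Rightarrow$(b), and (b)$\Rightarrow$(c) are clean and correct. The setup for (a)$\Rightarrow$(b) is also promising: the squarefreeness of divisible minimals, the identity $|\mathcal D|=\sum_{U\in\mathcal U}\prod_{j\in U}\alpha_j$, the fibre counts $f(A)$ with $f(A)+f([u]\setminus A)=2^{n-u}$, and the monotonicity $f(A)\ge f(A')$ for $A\supseteq A'$ are all valid. In particular, your observation that $f(\emptyset)=2^{n-u-1}$ alone forces \emph{every} $f(A)=2^{n-u-1}$ (via monotonicity plus the complementary-pair relation) is exactly the endgame you need.

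The gap is the ``technical heart'': the assertion that for every non-trivial complementary pair $\{A,[u]\setminus A\}$ the term $g(A)\bigl(\prod_{j\in A}\alpha_j-\prod_{j\in[u]\setminus A}\alpha_j\bigr)$ is nonnegative. This is not a consequence of monotonicity or of cross-intersection of fibres, and in fact it is \emph{false} for general maximal intersecting upsets $\mathcal U$. The sign of $g(A)=f(A)-2^{n-u-1}$ is a purely combinatorial feature of $\mathcal U$, whereas the sign of $\prod_{j\in A}\alpha_j-\prod_{j\in[u]\setminus A}\alpha_j$ depends on the numerical values $\alpha_1,\dots,\alpha_u$; the two are unrelated. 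Concretely, take $u=2$ with $\alpha_1=3$, $\alpha_2=2$, and let $\mathcal U$ be the principal filter at $\{2\}$. Then $f(\{1\})=0$, so $g(\{1\})<0$, yet $\alpha_1-\alpha_2>0$, and the pair term is strictly negative. Since your strategy is to deduce each term equals zero from ``sum equals zero plus each term nonnegative'', this counterexample breaks the argument: with some pair terms possibly negative, equality in the total sum no longer pins down $g(\emptyset)=0$.

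By contrast, the paper does not attempt a term-by-term sign analysis. It invokes the Erd\H os--Herzog--Sch\"onheim matching theorem (their Theorem~3, restated here as Theorem~D) to produce, inside $\mathcal A'_n$, a permutation $j\mapsto i_j$ with $\bar d_{i_j}\mid d_jp_n$; equality in the size then forces $\alpha(e_j)=1$ for the cofactors $e_j$, so that for each $v\le u$ the number of $d_j\in\mathcal A'_n$ divisible by $p_v$ equals the number not divisible by $p_v$. A short counting argument on the divisible minimals then shows none of them is divisible by any $p_v$ with $v\le u$, and symmetry among $p_{u+1},\dots,p_n$ finishes. If you want to rescue your approach, you would need a genuinely different inequality (not pair-by-pair positivity) that isolates the $\{\emptyset,[u]\}$ contribution; alternatively, importing the matching lemma as the paper does closes the gap immediately.
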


 For a set $\cal T$ of positive divisors
of $N$, let $R({\cal T},N)$ be the set of all positive divisors of
$N$ which can be divided by at least one of elements of $\cal T$. It
is easy to see that $R({\cal T},N)$ is a $N-$set if and only if
${\cal T}$ is a $N-$set.

With these notations, we have the following theorems.

\begin{theorem}\label{mainthm5}Let $N=p_1^{\alpha_1}\cdots p_n^{\alpha_n}$ with $\alpha_1\ge
\alpha_2\ge\cdots\ge \alpha_u>\alpha_{u+1}=\cdots =\alpha_n=1$, and
let ${\cal T}_1, \ldots , {\cal T}_k$ be all  sets of positive
divisors of $p_{u+1}\cdots p_n$ such that for each $i$,

(a)  no any two elements of ${\cal T}_i$ are coprime;

(b) no element of  ${\cal T}_i$ can be divided by another element of
 ${\cal T}_i$;

(c) any  divisor of $p_{u+1}\cdots p_n$ is either coprime to some
element of  ${\cal T}_i$  or divisible by one element of ${\cal
T}_i$.

Then $R({\cal T}_1,N), \ldots , R({\cal T}_k,N)$ are all maximal
$N-$sets ${\cal D}$ with the minimum size.
\end{theorem}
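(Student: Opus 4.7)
\medskip

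The plan is to reduce Theorem \ref{mainthm5} to Theorem \ref{mainthm2}, where the classification of minimum maximal $N$-sets has already been performed in terms of an arbitrary maximal $M$-set ${\cal D}'$ with $M:=p_{u+1}\cdots p_n$. What remains is to give a bijective translation between such ${\cal D}'$ and antichains ${\cal T}\subseteq \{d:d\mid M\}$ satisfying conditions (a)--(c), and then to verify that under this correspondence the product construction from Theorem \ref{mainthm2} is exactly $R({\cal T},N)$.

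First I would set up the correspondence ${\cal D}'\longleftrightarrow {\cal T}:=d({\cal D}')$. Given a maximal $M$-set ${\cal D}'$, I would check that ${\cal T}=d({\cal D}')$ satisfies (a) (no two elements coprime, inherited from ${\cal D}'$), (b) (antichain by definition of divisible minimal element), and (c): if $l\mid M$ belongs to ${\cal D}'$, then $l$ is divisible by some $t\in d({\cal D}')$; if $l\mid M$ does not belong to ${\cal D}'$, then maximality gives $d\in {\cal D}'$ with $(l,d)=1$, and any $t\in d({\cal D}')$ with $t\mid d$ is then coprime to $l$. Conversely, given ${\cal T}$ satisfying (a)--(c), I would set ${\cal D}':=R({\cal T},M)$ and verify that it is a maximal $M$-set (condition (a) makes it an $M$-set, condition (c) yields maximality) with $d({\cal D}')={\cal T}$ (use (b) to see ${\cal T}\subseteq d({\cal D}')$, and the defining property of $R$ to see the reverse inclusion). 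This shows that the map ${\cal D}'\mapsto d({\cal D}')$ is a bijection from maximal $M$-sets onto the family $\{{\cal T}_1,\dots ,{\cal T}_k\}$.

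Next I would verify the identity
\[
R({\cal T},N)=\Bigl\{dd':d\mid N/M,\ d'\in R({\cal T},M)\Bigr\}
\]
for every ${\cal T}\subseteq \{d:d\mid M\}$. This is a routine check: writing an arbitrary divisor $l$ of $N$ uniquely as $l=d\cdot l_1$ with $d\mid N/M$ and $l_1\mid M$, one has $t\mid l$ iff $t\mid l_1$ for every $t\mid M$, because $d$ and $t$ are coprime. Combining this identity with Theorem \ref{mainthm2}(c) and the bijection established above finishes the argument: the sets $R({\cal T}_i,N)$ are exactly the sets of the form $\{dd':d\mid N/M,d'\in {\cal D}'\}$ with ${\cal D}'$ ranging over all maximal $M$-sets.

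I do not anticipate a serious obstacle, since all ingredients are in place. The one place where some care is required is the verification that $d({\cal D}')={\cal T}$ when ${\cal D}'=R({\cal T},M)$: one must exclude the possibility that some element of $d({\cal D}')$ lies strictly above an element of ${\cal T}$, and this is precisely where condition (b) is used. Everything else is a direct unravelling of definitions combined with Theorem \ref{mainthm2}.
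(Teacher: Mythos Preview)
Your proposal is correct and follows essentially the same route as the paper. The only organizational difference is that the paper leans on part (b) of Theorem~\ref{mainthm2}: for the forward direction it checks directly that $d({\cal D})$ satisfies (a)--(c) and invokes Lemma~\ref{lem5} to get ${\cal D}=R(d({\cal D}),N)$, and for the backward direction it verifies maximality of $R({\cal T}_i,N)$ at the $N$ level before applying (b)$\Rightarrow$(a); you instead pass through part (c), first building the bijection at the $M$ level and then transporting it up via the product identity. The content is the same.
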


{\bf Example.} Let $N=420=2^2\cdot 3\cdot 5\cdot 7$. Then
$p_{u+1}\cdots p_n=3\cdot 5\cdot 7$ and the sets satisfying (a), (b)
and (c) are
$${\cal T}_1=\{ 3\} ,  {\cal T}_2=\{ 5\} , {\cal T}_3=\{ 7\} , {\cal T}_4=\{ 3\cdot 5, 3\cdot 7, 5\cdot
7\} .$$ Thus there are exactly four maximal $420-$sets $R({\cal
T}_1,N), R({\cal T}_2,N), R({\cal T}_3,N), R({\cal T}_4,N)$ such
that the equality in Theorem \ref{mainthm5} holds.

\begin{theorem}\label{mainthm6}Let $N=p_1^{\alpha_1}\cdots p_n^{\alpha_n}$ with $\alpha_1\ge
\alpha_2\ge\cdots\ge \alpha_u>\alpha_{u+1}=\cdots =\alpha_n\ge 2$.
Then $R(\{p_{u+1}\},N), \ldots , R(\{p_n\},N)$ are all maximal
$N-$sets ${\cal D}$ with the minimum size.
\end{theorem}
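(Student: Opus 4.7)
The plan is to derive Theorem~\ref{mainthm6} directly from Theorem~\ref{mainthm1}, since the two statements concern the same range of exponents $\alpha_n\ge 2$. The first step is to unpack the definition of $R({\cal T},N)$ (introduced just before Theorem~\ref{mainthm5}) in the case of a singleton: for any index $v$ with $1\le v\le n$,
\[
R(\{p_v\},N)\;=\;\{\,d:d\mid N,\ p_v\mid d\,\},
\]
which is exactly the set ${\cal D}(p_v)$ featured in clause (c) of Theorem~\ref{mainthm1}.

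Having made this identification, I would simply invoke the equivalence (a)$\Leftrightarrow$(c) of Theorem~\ref{mainthm1}. That equivalence asserts, under the hypothesis $\alpha_n\ge 2$, that a family ${\cal D}$ of divisors of $N$ is a maximal $N$-set of the minimum cardinality $\alpha_n\prod_{i=1}^{n-1}(\alpha_i+1)$ if and only if ${\cal D}=\{d:d\mid N,\ p_v\mid d\}$ for some $v$ with $u+1\le v\le n$. Combining this with the displayed identification above yields that the maximal $N$-sets of minimum size are precisely $R(\{p_{u+1}\},N),\dots,R(\{p_n\},N)$, which is the content of Theorem~\ref{mainthm6}.

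There is no real obstacle here: Theorem~\ref{mainthm6} is essentially a reformulation of Theorem~\ref{mainthm1} in the $R$-language, designed so as to parallel the statement of Theorem~\ref{mainthm5}. The only sanity check is that the phrase \emph{``can be divided by at least one of the elements of ${\cal T}$''} in the definition of $R$ specializes, when ${\cal T}=\{p_v\}$, to \emph{``is divisible by $p_v$''}, which is immediate. Consequently the entire argument reduces to a change of notation once Theorem~\ref{mainthm1} is in hand.
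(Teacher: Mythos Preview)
Your proposal is correct and matches the paper's own treatment: the paper simply states that Theorem~\ref{mainthm6} follows from Theorem~\ref{mainthm1} immediately, and your argument spells out exactly that reduction via the identification $R(\{p_v\},N)=\{d:d\mid N,\ p_v\mid d\}$.
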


Theorem \ref{mainthm6} follows from Theorem \ref{mainthm1}
immediately. We pose the following problem.

\begin{problem} Determine the number $H(N)$ of maximal $N-$sets $\cal
D$ with the minimum size.\end{problem}

{\bf Remark.} If $N=p_1^{\alpha_1}\cdots p_n^{\alpha_n}$ with
$\alpha_1\ge \alpha_2\ge\cdots\ge \alpha_u>\alpha_{u+1}=\cdots
=\alpha_n>1$, then by Theorem \ref{mainthm6} we have $H(N)=n-u$.
For the case $N=p_1^{\alpha_1}\cdots p_n^{\alpha_n}$ with
$\alpha_1\ge \alpha_2\ge\cdots\ge \alpha_u>\alpha_{u+1}=\cdots
=\alpha_n=1$, then $H(N)$ is the number of sets  with (a), (b) and
(c) in Theorem \ref{mainthm5}.

\section{Preliminary lemmas}

Let $N=p_1^{\alpha_1}\cdots p_n^{\alpha_n}$, where $\alpha_1\ge
\cdots \ge \alpha_n>0$. Let $N' =p_1\cdots p_n$. For $d\mid N'$,
define
$$\alpha (d)=\prod_{p_i\mid d} \alpha_i, \quad \bar{d} = \frac{N'}{d}.$$
Let
$${\cal A}=\{ d : d\mid N', d\in {\cal D}\} $$
and
$$  {\cal A}_n =\{ d : d\in {\cal A}, p_n\mid d\}, \quad {\cal A}_n'=\{ d : d\in {\cal A}, p_n\nmid d\} . $$
In this section we always assume that $\cal D$ is a maximal
$N-$set. Then $\cal A$ is also a maximal $N'-$set.

\begin{lemma}\label{lem6} If $\{ p_{u+1}, \dots , p_n\} \cap {\cal D} \not= \emptyset $, then
$d({\cal D})=\{ p_v\} $ for some $u+1\le v\le n$.
\end{lemma}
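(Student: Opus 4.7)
My plan is to show that whenever a prime $p_v$ lies in the maximal $N$-set $\cal D$, the set $\cal D$ is forced to equal $\{d:d\mid N,\ p_v\mid d\}$, from which the conclusion $d({\cal D})=\{p_v\}$ is immediate.

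First I would pick any $v\in\{u+1,\dots,n\}$ with $p_v\in{\cal D}$. Since $p_v$ is a prime, the $N$-set condition $(p_v,d)>1$ for every other $d\in{\cal D}$ forces $p_v\mid d$. Hence every element of $\cal D$ is a multiple of $p_v$, i.e. ${\cal D}\subseteq\{d:d\mid N,\ p_v\mid d\}$.

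Next I would invoke maximality. Any divisor $l\mid N$ with $p_v\mid l$ shares the factor $p_v$ with every member of $\cal D$, so the property "no two members coprime" is preserved after adjoining $l$. Since $\cal D$ is maximal, $l$ must already lie in $\cal D$. Therefore ${\cal D}=\{d:d\mid N,\ p_v\mid d\}$. In this explicit family, $p_v$ divides every element, so $p_v$ is the unique minimal element under divisibility, giving $d({\cal D})=\{p_v\}$.

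I do not foresee a real obstacle here; the proof is essentially a one-step consequence of the primality of $p_v$ together with the maximality of $\cal D$. The only subtlety worth flagging is that the restriction $u+1\le v\le n$ plays no role in the logical argument itself (the same reasoning would work for any prime divisor of $N$ appearing in $\cal D$); this restriction simply reflects the context in which the lemma is applied later.
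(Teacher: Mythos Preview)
Your argument is correct and rests on the same observation as the paper's: once $p_v\in{\cal D}$, primality together with the $N$-set condition forces $p_v\mid d$ for every $d\in{\cal D}$, so $p_v$ is the unique divisibility-minimal element. The paper stops right there; your additional use of maximality to obtain the full description ${\cal D}=\{d:d\mid N,\ p_v\mid d\}$ is correct but unnecessary for the stated conclusion $d({\cal D})=\{p_v\}$.
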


\begin{proof} Since $\{ p_{u+1}, \dots , p_n\} \cap {\cal D} \not= \emptyset
$, there is an integer $u+1\le v\le n$ such that $p_v\in \cal D$.
By $\cal D$ being a $N$-set we have $p_v\mid d$ for all $d\in \cal
D$. Hence $d({\cal D})=\{ p_v\} $. This completes the proof of
Lemma \ref{lem6}.\end{proof}

\begin{lemma}\label{lem1} Let $d\mid N'$. Then exactly one of $d$ and
$\bar d$ is in $\cal A$.\end{lemma}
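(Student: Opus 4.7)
The plan is to exploit the coprimality $\gcd(d,\bar d)=1$ that follows from $d\bar d=N'$ being squarefree, and to invoke the maximality of $\cal A$ as an $N'$-set (recorded in the section preamble) for the other half. The argument splits cleanly into the two inclusions ``at most one'' and ``at least one''.

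First I would verify that at most one of $d,\bar d$ belongs to $\cal A$. Since $N'=p_1\cdots p_n$ is squarefree and $d\bar d=N'$, the integers $d$ and $\bar d$ share no prime factor, so $\gcd(d,\bar d)=1$. Because $\cal A$ is an $N'$-set, no two of its members are coprime, hence $\cal A$ cannot contain both $d$ and $\bar d$.

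Next I would show at least one of them lies in $\cal A$, arguing by contradiction. If $d\notin\cal A$, the maximality of $\cal A$ as an $N'$-set produces some $a\in\cal A$ with $\gcd(a,d)=1$; since $a\mid N'$ this forces $a\mid\bar d$. Symmetrically, from $\bar d\notin\cal A$ one obtains some $b\in\cal A$ with $b\mid d$. But then $\gcd(a,b)\mid\gcd(\bar d,d)=1$, which contradicts the fact that elements of $\cal A$ are pairwise non-coprime.

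The only potential subtlety is the transfer of maximality from $\cal D$ to $\cal A$, but this is already asserted at the start of Section~2 (and is easy to check by passing to the squarefree part of any element that would witness the failure). I therefore expect no serious obstacle; the lemma is a short and clean consequence of squarefreeness together with the two directions of the maximality condition.
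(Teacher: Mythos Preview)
Your proof is correct and follows essentially the same approach as the paper: both first use $\gcd(d,\bar d)=1$ to show at most one lies in $\cal A$, and then invoke the maximality of $\cal A$ for the other direction. The only cosmetic difference is that the paper, after producing $a\in\cal A$ with $a\mid\bar d$, concludes directly that $\bar d\in\cal A$ by upward closure of a maximal $N'$-set, whereas you repeat the maximality step symmetrically to manufacture a coprime pair $a,b\in\cal A$ and derive a contradiction.
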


\begin{proof} Since $(d, \bar d)=1$ and
$\cal A$ is the $N'-$set, we know that at most one of $d$ and $\bar
d$ is in $\cal A$.

Suppose that $d\notin {\cal A}$. By the maximality of $\cal A$ there
exists $d'\in {\cal A}$ such that $(d, d')=1$. Hence $d'\mid \bar
d$. Again, by the maximality of $\cal A$ and $d'\mid \bar d$ we have
$\bar d\in {\cal A}$. This completes the proof of Lemma
\ref{lem1}.\end{proof}

\begin{lemma}\label{lem3} We have
$${\cal A}_n \cup \{ \bar d : d\in {\cal A}_n'\} =\{ l p_n : l\mid
p_1\cdots p_{n-1}\} .$$
\end{lemma}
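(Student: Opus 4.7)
The plan is to prove the claimed set equality by establishing the two inclusions separately, with Lemma \ref{lem1} doing the real work. The key structural observation is that the complementation map $d\mapsto \bar d$ is an involution on the divisors of $N'$ that swaps the property ``$p_n\mid d$'' with ``$p_n\nmid d$'', since $p_n$ divides exactly one of $d,\bar d$.

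For the left-to-right inclusion, I would just unpack the definitions. Elements of ${\cal A}_n$ are divisors of $N'$ divisible by $p_n$, so they are automatically of the form $lp_n$ with $l\mid p_1\cdots p_{n-1}$. For any $d\in {\cal A}_n'$, we have $d\mid p_1\cdots p_{n-1}$, hence $\bar d=N'/d$ is divisible by $p_n$ and of the form $lp_n$ with $l=(p_1\cdots p_{n-1})/d$.

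For the opposite (substantive) inclusion, I would take an arbitrary $e=lp_n$ with $l\mid p_1\cdots p_{n-1}$ and consider its complement $\bar e=(p_1\cdots p_{n-1})/l$, which is coprime to $p_n$. By Lemma \ref{lem1} exactly one of $e,\bar e$ lies in $\cal A$. If $e\in \cal A$, then $p_n\mid e$ forces $e\in {\cal A}_n$, placing $e$ in the first piece of the union. If instead $\bar e\in \cal A$, then $p_n\nmid \bar e$ forces $\bar e\in {\cal A}_n'$, and the involution identity $e=\overline{\bar e}$ places $e$ in the second piece.

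I do not expect a serious obstacle here; the proof is essentially a direct application of Lemma \ref{lem1}, and the only subtlety is the bar-notation bookkeeping. As a sanity check, the two pieces of the union are in fact disjoint: if $\bar d\in {\cal A}_n$ for some $d\in {\cal A}_n'$, then both $d$ and $\bar d$ would lie in $\cal A$, contradicting Lemma \ref{lem1}.
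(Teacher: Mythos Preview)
Your proposal is correct and follows essentially the same route as the paper's proof: the forward inclusion is immediate from the definitions, and the reverse inclusion is handled by applying Lemma~\ref{lem1} to $lp_n$ and its complement, then using the involution $\overline{\overline{lp_n}}=lp_n$. The only difference is cosmetic---the paper phrases the case split as ``suppose $lp_n\notin{\cal A}_n$'' rather than your dichotomy on which of $e,\bar e$ lies in $\cal A$---and your added remark on disjointness is a correct bonus observation not stated in the paper.
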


\begin{proof} It is clear that
${\cal A}_n \cup \{ \bar d : d\in {\cal A}_n'\} \subseteq \{ l p_n :
l\mid p_1\cdots p_{n-1}\} $. Now let $l\mid p_1\cdots p_{n-1}$.
Suppose that $lp_n\notin {\cal A}_n$. Then $lp_n\notin {\cal A}$. By
Lemma \ref{lem1} we have $\overline{lp_n}\in {\cal A}$. Thus
$\overline{lp_n}\in {\cal A}_n'$.     So
$lp_n=\overline{\overline{lp_n}}\in \{ \bar d : d\in {\cal A}_n'\}
$. This completes the proof of Lemma \ref{lem3}.\end{proof}

\begin{lemma}\label{lem4} Let $\cal D$ be a maximal $N-$set with the minimum size and ${\cal
A}_n'=\{ d_1, d_2, \dots , d_s\}$. Then there exists a permutation
$i_1, i_2, \dots , i_s$ of $1, 2, \dots , s$ such that
$$\bar{d_{i_j}} \mid d_jp_n,\quad \alpha (d_j)=\alpha
(\bar{d_{i_j}}),\quad j=1,2,\dots , s.$$
\end{lemma}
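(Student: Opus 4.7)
The plan is to count $|\mathcal{D}|$ via the squarefree reduction and then exploit the minimum-size hypothesis to set up the bijection. First, maximality of $\mathcal D$ makes it an up-set in the divisor lattice of $N$, so for every $d \in \mathcal A$ the $\alpha(d)$ divisors of $N$ whose squarefree radical is $d$ all lie in $\mathcal D$; conversely, every element of $\mathcal D$ has its radical in $\mathcal A$. Hence $|\mathcal D| = \sum_{d \in \mathcal A}\alpha(d)$. Applying Lemma \ref{lem3} (together with the disjointness from Lemma \ref{lem1}),
$$\sum_{d \in \mathcal A_n}\alpha(d) + \sum_{d \in \mathcal A_n'}\alpha(\bar d) = \sum_{l \mid p_1 \cdots p_{n-1}}\alpha(lp_n) = \alpha_n\prod_{i=1}^{n-1}(\alpha_i+1),$$
and therefore
$$|\mathcal D| = \alpha_n\prod_{i=1}^{n-1}(\alpha_i+1) + \sum_{d \in \mathcal A_n'}\bigl(\alpha(d) - \alpha(\bar d)\bigr).$$
Since $|\mathcal D|$ equals the minimum, this forces
\begin{equation*}
\sum_{d \in \mathcal A_n'}\alpha(d) = \sum_{d \in \mathcal A_n'}\alpha(\bar d). \tag{$\ast$}
\end{equation*}

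The structural ingredient is that for \emph{any} permutation $\sigma$ of $\mathcal A_n'$ satisfying $\overline{\sigma(d)} \mid dp_n$ for every $d$, one automatically has $\alpha(d) \ge \alpha(\overline{\sigma(d)})$. Setting $e = \sigma(d)$ and $P = p_1\cdots p_{n-1}$, the condition $\bar e \mid dp_n$ is equivalent to $P/e \mid d$, so the prime divisors of $P/e$ form a subset of those of $d$, which gives
$$\alpha(d) = \alpha(P/e)\cdot \alpha(\gcd(d,e)).$$
Because $d, e \in \mathcal A$ and $\mathcal A$ is an $N'$-set, $\gcd(d,e) > 1$; combined with $\alpha_i \ge \alpha_n$ for every $i$, this yields $\alpha(\gcd(d,e)) \ge \alpha_n$, and hence $\alpha(d) \ge \alpha_n\alpha(P/e) = \alpha(\bar e)$.

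The existence of such a $\sigma$ will be proved via Hall's marriage theorem applied to the bipartite graph $G$ on two copies of $\mathcal A_n'$ with edges $d \sim d'$ iff $P/d' \mid d$. Hall's condition should follow from the upward closure of $\mathcal A_n'$ in the divisor lattice of $P$ (a consequence of the up-set structure of $\mathcal A$) together with the pairwise non-coprimality inside $\mathcal A_n'$; verifying this is the main obstacle of the proof. Once such a $\sigma$ is produced, summing $\alpha(d) \ge \alpha(\overline{\sigma(d)})$ over $d \in \mathcal A_n'$ gives
$$\sum_{d \in \mathcal A_n'}\alpha(d) \;\ge\; \sum_{d \in \mathcal A_n'}\alpha(\overline{\sigma(d)}) \;=\; \sum_{d' \in \mathcal A_n'}\alpha(\bar{d'}).$$
Combined with $(\ast)$ every inequality collapses to equality, forcing $\alpha(d_j) = \alpha(\overline{\sigma(d_j)})$ for each $j$. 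Setting $d_{i_j} = \sigma(d_j)$ then delivers the permutation required by the lemma.
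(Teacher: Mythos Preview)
Your overall strategy matches the paper's proof exactly: count $|\mathcal D|$ via the squarefree radical, use Lemma~\ref{lem3} to derive $(\ast)$, produce a permutation with $\bar{d_{i_j}}\mid d_jp_n$, establish the termwise inequality $\alpha(d_j)\ge\alpha(\bar{d_{i_j}})$, and then collapse via $(\ast)$. Your inequality argument through $\gcd(d,e)>1$ is in fact the same as the paper's in disguise, since in their notation $e_j=d_jp_n/\bar{d_{i_j}}=\gcd(d_j,d_{i_j})$; they derive $e_j>1$ by the slightly more indirect observation that $\bar{d_{i_j}}/p_n\notin\mathcal A$, whereas you simply invoke that two members of $\mathcal A$ are never coprime.

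The one point you leave open --- the existence of $\sigma$ --- is exactly where the paper plugs in an external result. Rather than verifying Hall's condition, the paper quotes Theorem~3 of Erd\H os--Herzog--Sch\"onheim (restated in the paper as Theorem~D): for any up-set $F$ of divisors of a squarefree integer $M$, there is a permutation $\sigma$ of $F$ with $(M/\sigma(d))\mid d$ for every $d\in F$. Taking $M=p_1\cdots p_{n-1}$ and $F=\mathcal A_n'$ gives the required permutation immediately. Note that only the up-set property of $\mathcal A_n'$ is used for this step; the pairwise non-coprimality you mention is irrelevant for producing $\sigma$ (it matters only for the inequality). So your proposed Hall verification would amount to reproving Theorem~D --- correct in spirit, but you should be aware that this is a nontrivial combinatorial lemma rather than a routine check.
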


\begin{proof}
For $d=p_{i_1}^{\beta_1} \cdots p_{i_k}^{\beta_k}$ with
$0<\beta_j\le \alpha_{i_j} (1\le j\le k)$, by the maximality of
$\cal D$, we have $d\in {\cal D}$ if and only if $p_{i_1} \cdots
p_{i_k}\in {\cal A}$. So
\begin{equation}\label{m2.2}|{\cal D}|=\sum_{d\in {\cal A}} \alpha (d)
=\sum_{d\in {\cal A}_n} \alpha (d)+\sum_{d\in {\cal A}_n'} \alpha
(d).\end{equation} By Lemma \ref{lem3} we have $(\alpha (1)=1)$
\begin{equation}\label{m2.3}
\alpha_n \prod_{i=1}^{n-1} (\alpha_i+1)=\sum_{l\mid p_1\cdots
p_{n-1}} \alpha (lp_n)=\sum_{d\in {\cal A}_n} \alpha (d)+\sum_{d\in
{\cal A}_n'} \alpha (\bar d).\end{equation} Since  $\cal D$ is a
maximal $N-$set with the minimum size, we have
\begin{equation}\label{m2.1}
|{\cal D}|=\alpha_n \prod_{i=1}^{n-1} (\alpha_i+1).\end{equation} By
(\ref{m2.2}), (\ref{m2.3}) and (\ref{m2.1}) we have
\begin{equation}\label{m2.4}
\sum_{d\in {\cal A}_n'} \alpha (d)=\sum_{d\in {\cal A}_n'} \alpha
(\bar d).\end{equation}

In order to prove Theorem C, Erd\H os, Herzog and Sch\"{o}nheim
 proved a combinatorial theorem (\cite[Theorem
3]{Erdos-Herzog}). We will employ its following equivalent form to
prove Lemma \ref{lem4}.

{\bf Theorem D.} {\it Let $M$ be a squarefree integer. Denote by
${\bar {d}}'=M/d$ for $d\mid M$. If $F=\{ d_1, d_2, \ldots , d_s\}
$ is a set of divisors of $M$ such that $d_i\mid d\mid M
\Rightarrow
 d\in F$, then there exists a permutation $i_1, i_2, \dots , i_s$ of $1, 2, \dots , s$ such that $\bar{d_{i_j}}' \mid
 d_j $ $(1\le j\le s)$.}\\

In order to employ Theorem D, let $M=p_1\cdots p_{n-1}$ and
$F={\cal A}_n'$. If $d_i\mid d\mid M$, then by the maximality of
$\cal A$ we have $d\in {\cal A}_n'$. Noting that
$$\bar{d_i}'=\frac
M{d_i}=\frac{N'/d_i}{p_n}=\frac{\bar{d_i}}{p_n},$$ by Theorem D
there exists a permutation $i_1, i_2, \dots , i_s$ of $1, 2, \dots
, s$ such that
$$\frac{\bar{d_{i_j}}}{p_n} \mid
 d_j, \quad 1\le j\le s.$$
 That is, $\bar{d_{i_j}} \mid d_jp_n (1\le j\le s)$.
Let $d_jp_n=\bar{d_{i_j}} e_j $ $(1\le j\le s)$. Since $d_{i_j}\in
 {\cal A}$, by Lemma \ref{lem1} we have $\bar{d_{i_j}}\notin {\cal A}$. Thus
 $\bar{d_{i_j}}/p_n\notin {\cal A} (1\le j\le s)$ by the maximality of $\cal A$.
So $e_j>1$ $(1\le j\le s)$, otherwise, $\bar{d_{i_j}}/p_n=d_j\in
\cal A$, a contradiction. Thus, for $1\le j\le s$, we have
\begin{equation*}\label{m2.5}
\alpha (d_j) \alpha (p_n) =\alpha (d_jp_n)=\alpha
(\bar{d_{i_j}}e_j)= \alpha (\bar{d_{i_j}})
 \alpha (e_j)\ge \alpha (\bar{d_{i_j}})\alpha (p_n).\end{equation*}
Hence
\begin{equation}\label{m2.5} \alpha (d_j)\ge \alpha (\bar{d_{i_j}}),\quad 1\le j\le s.\end{equation}
 By (\ref{m2.4}) and (\ref{m2.5}) we have
\begin{equation*}\label{m2.6}
\alpha (d_j)=\alpha (\bar{d_{i_j}}),\quad 1\le j\le
s.\end{equation*}

This completes the proof of Lemma \ref{lem4}.\end{proof}

\begin{lemma}\label{lem5} We have
$D=R(d(D), N)$.\end{lemma}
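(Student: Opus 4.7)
The plan is to prove the two set inclusions separately, using only the defining property of a maximal $N$-set (that adding any divisor of $N$ to $\cal D$ forces some element of $\cal D$ to be coprime to the added element) and the definition of $d({\cal D})$ as the set of divisibility-minimal elements.

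For the inclusion $R(d({\cal D}), N) \subseteq {\cal D}$, I would take any $l \mid N$ with some $d \in d({\cal D})$ satisfying $d \mid l$. Since $d \in {\cal D}$, for every $d'' \in {\cal D}$ we have $(d, d'') > 1$, and therefore $(l, d'') \ge (d, d'') > 1$. Hence $l$ is coprime to no element of ${\cal D}$. By maximality of ${\cal D}$, adding $l$ would violate no constraint, so $l$ must already belong to ${\cal D}$; otherwise we could enlarge ${\cal D}$, contradicting its maximality.

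For the inclusion ${\cal D} \subseteq R(d({\cal D}), N)$, I would take any $l \in {\cal D}$ and look at the nonempty set $S = \{ d \in {\cal D} : d \mid l\}$, which contains $l$ itself. Choose $d \in S$ with the fewest prime factors (counted with multiplicity); equivalently, any element minimal in $S$ under divisibility. I claim that this $d$ lies in $d({\cal D})$. Indeed, if some $d' \in {\cal D}$ with $d' \ne d$ divided $d$, then $d' \mid d \mid l$ would give $d' \in S$, violating the chosen minimality. Therefore $d \in d({\cal D})$ and $d \mid l$, so $l \in R(d({\cal D}), N)$.

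Neither direction presents a real obstacle; the argument is essentially a formal consequence of maximality plus the existence of minimal elements in finite posets. The only point worth a moment of care is the first inclusion, where one must observe that having a common divisor with the fixed element $d \in d({\cal D}) \subseteq {\cal D}$ automatically produces a common divisor with every other element of ${\cal D}$, so that maximality can be invoked on $l$ as a whole rather than element-by-element.
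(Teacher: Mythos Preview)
Your proof is correct and follows essentially the same two-inclusion argument as the paper. The paper's version is terser: for $R(d({\cal D}),N)\subseteq {\cal D}$ it simply invokes the upward-closure property of a maximal $N$-set (stated in the introduction as ``if $d\in{\cal D}$ and $d\mid l\mid N$ then $l\in{\cal D}$''), which is exactly what you reprove explicitly via $(l,d'')\ge (d,d'')>1$; for ${\cal D}\subseteq R(d({\cal D}),N)$ both you and the paper use that every element of the finite set ${\cal D}$ is divisible by a divisibility-minimal element.
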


\begin{proof} By the maximality of $\cal D$ and $d(D)\subseteq \cal
D$ we have $R(d(D), N)\subseteq \cal D$. By the definition of
$d({\cal D})$ and $R(d(D), N)$ we have ${\cal D} \subseteq R(d(D),
N)$. So $D=R(d(D), N)$. This completes the proof of Lemma
\ref{lem5}.\end{proof}

\section{Proof of Theorems}

\begin{proof}[Proof of Theorem \ref{mainthm1}]

(a) $\Rightarrow$ (b): By Lemma \ref{lem6} we may assume that $\{
p_{u+1}, \dots , p_n\} \cap {\cal D} =\emptyset $. Then $p_n\notin
\cal A$. By Lemma \ref{lem1} we have $\bar{p_n}\in \cal A$. That
is, $\bar{p_n}\in {\cal A}_n'$. Let ${\cal A}_n'=\{ d_1, d_2,
\dots , d_s\}$. By Lemma \ref{lem4} there exists a permutation
$i_1, i_2, \dots , i_s$ of $1, 2, \dots , s$ such that
$$\bar{d_{i_j}} \mid d_jp_n,\quad \alpha (d_j)=\alpha
(\bar{d_{i_j}}).$$ Without loss of generality, we may assume that
$d_{i_1}=\bar{p_n}$. Then $\alpha (d_1)=\alpha
(\bar{d_{i_1}})=\alpha (p_n)=\alpha_n$. Since $\alpha_1\ge
\alpha_2\ge\cdots\ge \alpha_u>\alpha_{u+1}=\cdots =\alpha_n\ge 2$,
we have $d_1\in \{ p_{u+1}, \dots , p_n\}$, a contradiction with $\{
p_{u+1}, \dots , p_n\} \cap {\cal D} =\emptyset $.

(b) $\Rightarrow$ (c): It follows from Lemma \ref{lem5}.

(c) $\Rightarrow$ (a): It follows from the arguments before Theorem
\ref{mainthm1}.

This completes the proof of Theorem \ref{mainthm1}.\end{proof}

\begin{proof}[Proof of Theorem \ref{mainthm2}]

(a) $\Rightarrow$ (b): By Lemma \ref{lem6} we may assume that $\{
p_{u+1}, \dots , p_n\} \cap {\cal D} =\emptyset $. Then $p_n\notin
\cal A$. By Lemma \ref{lem1} we have $\bar{p_n}\in \cal A$. That
is, $\bar{p_n}\in {\cal A}_n'$. Let ${\cal A}_n'=\{ d_1, d_2,
\dots , d_s\}$. By Lemma \ref{lem4} there exists a permutation
$i_1, i_2, \dots , i_s$ of $1, 2, \dots , s$ such that
$\bar{d_{i_j}} \mid d_jp_n,\quad \alpha (d_j)=\alpha
(\bar{d_{i_j}})$. As in Lemma \ref{lem4}, let
$d_jp_n=\bar{d_{i_j}}e_j (1\le j\le s)$. Since $\alpha_n =1$ and
$\alpha (d_j)=\alpha (\bar{d_{i_j}})$, we have $\alpha (e_j)=1$
$(1\le j\le s)$. Hence, for $1\le v\le u$ and $1\le j\le s$ we
have $p_v\nmid e_j$ and
$$p_v\mid d_j\Leftrightarrow p_v\mid
\bar{d_{i_j}} \Leftrightarrow p_v\nmid d_{i_j}.$$ Thus, for $1\le
v\le u$ we have
$$|\{ j : p_v\mid d_j\} | = |\{ j : p_v\nmid d_{i_j}\} |= |\{ j :
p_v\nmid d_j\} |.$$ So, for $1\le v\le u$ we have
\begin{equation}\label{1/2}|\{ j : p_v\mid d_j\} |= |\{ j : p_v\nmid d_{j}\}
|=\frac 12 |{\cal A}_n'| .\end{equation}
 Let $d(D)=\{ h_1, h_2,
\ldots , h_{t}\} $. Then $h_i\nmid h_j$ for all $i\not= j$.
Without loss of generality, we may assume that $p_n\nmid h_i$ $
(1\le i\le r)$ and $p_n\mid h_j$ $(r+1\le j\le t)$. Then each
$d_i\in {\cal A}_n'$ can be divided by at least one of $h_1, h_2,
\ldots , h_r$. Since $\cal D$ is a maximal $N$-set, we have
$d({\cal D})\subseteq \cal A$. So $h_1, h_2, \ldots , h_r\in {\cal
A}_n'$. Fixed $1\le v\le u$. Without loss of generality, we may
assume that $h_1, h_2,\ldots , h_w$ are all $h_i$ with $p_v\nmid
h_i$ and $p_n\nmid h_i$.

Let $ {\cal B}= \{ d : p_v\nmid d, d\in {\cal A}_n' \} $. By
\eqref{1/2} we have
$$|\{ p_vd : d\in {\cal B}\} | =|{\cal B}|
=\frac 12 |{\cal A}_n'|.$$ Since ${\cal B}\cap \{ p_vd : d\in
{\cal B}\} =\emptyset $ and $\{ p_vd : d\in {\cal B}\} \subseteq
{\cal A}_n'$, we have ${\cal A}_n'={\cal B}\cup \{ p_vd : d\in
{\cal B}\} $. Let $d\in {\cal B}$. If $w<i\le r$, then by $p_v\mid
h_i$ we have $h_i\nmid d$. If $r<i\le t$, then by $p_n\mid h_i$
and $d\in {\cal A}_n'$ we have $h_i\nmid d$. That is, $d$ cannot
be divided by any $h_i$ with $i>w$. So $d$ can be divided by one
of $h_1, h_2, \ldots , h_w$. Thus each $d'\in {\cal A}_n'$ can be
divided by one of $h_1, h_2, \ldots , h_w$. Since $w\le r$ and
$h_1, h_2, \ldots , h_r\in {\cal A}_n'$ and $h_i\nmid h_j$ for all
$i\not= j$, we have $w=r$. Thus, we have proved that for all $1\le
v\le u$ we have $p_v\nmid h_i$ $(1\le i\le r)$.

Now we have proved that for any given $i$ with $1\le i\le t$, if
$p_n\nmid h_i$, then $p_v\nmid h_i$ for any $1\le v\le u$. Since
$\alpha_{u+1}=\cdots =\alpha_n=1$, the primes $p_{u+1}, \ldots ,
p_n$ are in the same position. Hence, for any given $i,j$ with
$1\le i\le t$ and $u+1\le j\le n$, if $p_j\nmid h_i$, then
$p_v\nmid h_i$ for any $1\le v\le u$. This means that for $1\le
i\le t$, if $p_{u+1}\cdots p_n\nmid h_i$, then $(p_1\cdots p_u,
h_i)=1$, i.e., $h_i\mid p_{u+1}\cdots p_n$. So, for each $1\le
i\le t$, either $p_{u+1}\cdots p_n\mid h_i$ or $h_i\mid
p_{u+1}\cdots p_n$. Since $h_i\nmid h_j$ for all $i\not= j$, we
have either $p_{u+1}\cdots p_n\mid h_i$ for all $1\le i\le t$ or
$h_i\mid p_{u+1}\cdots p_n$ for all $1\le i\le t$. If
$p_{u+1}\cdots p_n\mid h_i$  for all $1\le i\le t$, then $p_n\mid
h_i$  for all $1\le i\le t$. Thus $p_n\mid d$ for all $d\in {\cal
A}$, a contradiction with $\bar{p_n}\in {\cal A}$ and $p_n\nmid
\bar{p_n}$. Hence $h_i\mid p_{u+1}\cdots p_n$ for all $1\le i\le
t$. That is, $$d(D)\subseteq \{ d : d \mid p_{u+1}\cdots p_n\} .$$

(b) $\Rightarrow$ (c): Let ${\cal D}'={\cal D}\cap \{ d : d \mid
p_{u+1}\cdots p_n\}$. Since $\cal D$ is a $N$-set, ${\cal D}'$ is
a $p_{u+1}\cdots p_n$-set. For $d \mid p_{u+1}\cdots p_n$, if
$d\notin {\cal D}'$, then $d\notin {\cal D}$. Since $\cal D$ is a
maximal $N$-set, there exists $l\in \cal D$ such that $(d, l)=1$.
By the definition of $d(D)$, $l$ can be divided by an element $l'$
of $d(D)$. So $(d, l')=1$. By $d(D)\subseteq  \{ d : d \mid
p_{u+1}\cdots p_n\}$ we have $l'\in {\cal D}'$. Thus we have
proved that ${\cal D}'$ is a maximal $p_{u+1}\cdots p_n$-set. By
 $d(D)\subseteq  \{ d : d \mid p_{u+1}\cdots
p_n\}$ we have $d({\cal D}')=d({\cal D})$. By Lemma \ref{lem5} we
have ${\cal D}'=R(d({\cal D}'),p_{u+1}\cdots p_n)=R(d({\cal
D}),p_{u+1}\cdots p_n)$. Again, by Lemma \ref{lem5} and
$d(D)\subseteq  \{ d : d \mid p_{u+1}\cdots p_n\}$ we have
\begin{eqnarray*} {\cal D}&=&R(d({\cal D}), N)=\{ dd' : d\mid \frac
N{p_{u+1}\cdots p_n}, d'\in R(d({\cal
D}),p_{u+1}\cdots p_n)\}\\
&=&\{ dd' : d\mid \frac N{p_{u+1}\cdots p_n}, d'\in {\cal
D}'\}.\end{eqnarray*}

(c) $\Rightarrow$ (a): It follows from the arguments before Theorem
\ref{mainthm1}.

 This completes the proof of Theorem \ref{mainthm2}.\end{proof}

\begin{proof}[Proof of Theorem \ref{mainthm5}]
Suppose that $\cal D$ is  a maximal $N-$set with the minimum size.
 By Theorem
\ref{mainthm2} we have $$d({\cal D})\subseteq \{ d : d\mid
p_{u+1}\cdots p_n\} .$$ Since no any two elements of ${\cal D}$ are
coprime, we know that no any two elements of $d({\cal D})$ are
coprime. That is (a). By the definition of $d({\cal D})$ we know
that no element of $d({\cal D})$ can be divided by another element
of $d({\cal D})$. That is (b). Let $l\mid p_{u+1}\cdots p_n$. If
$l\in {\cal D}$, then $l$ can be divided by an element of $d({\cal
D})$. If $l\notin {\cal D}$, then, by the maximality of $\cal D$,
there exists $d_1\in {\cal D}$ with $(d_1, l)=1$. Since $d_1\in
{\cal D}$, there exists $d\in d({\cal D})$ with $d\mid d_1$. Hence
$(d, l)=1$. That is (c). Hence $d({\cal D})$ is one of ${\cal T}_1,
\ldots , {\cal T}_k$.
 By Lemma \ref{lem5} we have ${\cal D}=R(d({\cal D}), N)$.  Hence ${\cal
D}$ is one of $R({\cal T}_1, N), \ldots , R({\cal T}_k, N)$.

Now we show that each $R({\cal T}_i,N)$ is a maximal $N-$set  with
the minimum size.

Since no any two elements of ${\cal T}_i$ are coprime, we know
that no any two elements of $R({\cal T}_i,N)$ are coprime. That
is, $R({\cal T}_i,N)$ is a $N-$set. In order to prove that
$R({\cal T}_i,N)$ is maximal, it is enough to prove that for any
$l>1$ with $l\mid N$ and $l\notin R({\cal T}_i,N)$  there exists
$d\in R({\cal T}_i,N)$ with $(d, l)=1$. It is enough to prove that
there exists $d\in {\cal T}_i$ with $(d, l)=1$. Let $l_1=(l,
p_{u+1}\cdots p_n)$. Noting that ${\cal T}_i$ is a set of positive
divisors of $p_{u+1}\cdots p_n$, it is enough to prove that there
exists $d\in {\cal T}_i$ with $(d, l_1)=1$. Since $l\notin R({\cal
T}_i,N)$, we know that $l$ cannot be divided by any element of
${\cal T}_i$. So $l_1$ cannot be divided by any element of ${\cal
T}_i$. By the definition of ${\cal T}_i$ (i. e. (c) of Theorem
\ref{mainthm5}), there exists $d\in {\cal T}_i$ with $(d, l_1)=1$.
Thus we have proved that $R({\cal T}_i,N)$ is a maximal $N-$set.
Noting that no element of ${\cal T}_i$ can be divided by another
element of ${\cal T}_i$, we have $d(R({\cal T}_i,N))={\cal T}_i$.
 Since ${\cal T}_i \subseteq \{ d : d \mid p_{u+1}\cdots p_n\}$, by  Theorem \ref{mainthm2} we have $R({\cal T}_i,N)$ has the
minimum size.
 This completes the proof of Theorem \ref{mainthm5}.\end{proof}

\section{Final Remarks}

Finally we pose the following problems for further research.

\begin{problem} Fix $t\ge 2$ and $N=p_1^{\alpha_1}\cdots p_n^{\alpha_n}$, $\alpha_1\ge
\alpha_2\ge\cdots \ge \alpha_n$. Let $\cal D$ be a set of positive
divisors $d$ of $N$ which have exactly $t$ distinct prime factors
(i.e. $\omega (d)=t$) such that no two members of the set being
coprime and no additional member may be included in $\cal D$ without
contradicting this requirement. Determine $m(N, t)=\min |{\cal
D}|$.\end{problem}

\begin{problem} Fix $t\ge 2$ and $N=p_1^{\alpha_1}\cdots p_n^{\alpha_n}$, $\alpha_1\ge
\alpha_2\ge\cdots \ge \alpha_n$. Let $\cal D$ be a set of positive
divisors $d$ of $N$ which have exactly $t$  prime factors (i.e.
$\Omega (d)=t$) such that no two members of the set being coprime
and no additional member may be included in $\cal D$ without
contradicting this requirement. Determine $M(N, t)=\min |{\cal D}|$.
\end{problem}

{\bf Acknowledgements. } We are grateful to the referees for their
valuable comments.

\end{document}